\numberwithin{equation}{section}
\definecolor{webgreen}{rgb}{0,.5,0}
\definecolor{webbrown}{rgb}{.6,0,0}
\newcommand{\C}{{\mathbb C}}
\newcommand{\Z}{{\mathbb Z}}
\newtheorem*{theoremUn}{Theorem}
\newtheorem{thm}{Theorem}
\newtheorem{lemma}{Lemma}
\newtheorem{example}{Example}
\newtheorem{corollary}[thm]{Corollary}
\title{Fibonacci-Zeta infinite series associated with the polygamma functions}
\author[1]{Kunle Adegoke\thanks{adegoke00@gmail.com}}
\affil{Department of Physics and Engineering Physics, \mbox{Obafemi Awolowo University, Ile-Ife, 220005 Nigeria}}
\author[2]{Sourangshu Ghosh\thanks{sourangshug123@gmail.com}}
\affil{Department of Civil Engineering, \mbox{Indian Institute of Technology, Kharagpur}}
\begin{document}
\date{}

\maketitle

\noindent 2010 {\it Mathematics Subject Classification}:
Primary 11B39; Secondary 11B37.

\noindent \emph{Keywords: }
Fibonacci number, Lucas number, summation identity, series, digamma function, polygamma function, zeta function.

\begin{abstract} \noindent
We derive new infinite series involving Fibonacci numbers and Riemann zeta numbers. The calculations are facilitated by evaluating linear combinations of polygamma functions of the same order at certain arguments.
\end{abstract}
\section{Introduction}
The Fibonacci numbers, $F_n$, and the Lucas numbers, $L_n$, are defined, for all integers $n$ by the Binet formulas:
\begin{equation}
F_n  = \frac{{\alpha ^n  - \beta ^n }}{{\alpha  - \beta }},\quad L_n  = \alpha ^n  + \beta ^n\,,
\end{equation}
where $\alpha$ and $\beta$ are the zeros of the characteristic polynomial, $x^2-x-1$, of the Fibonacci sequence. Thus $\alpha+\beta=1$ and $\alpha\beta=-1$; so that $\alpha=(1+\sqrt5)/2$ (the golden ratio) and $\beta=-1/\alpha=(1-\sqrt5)/2$. 
Koshy \cite{koshy} and Vajda \cite{vajda} have written excellent books dealing with Fibonacci and Lucas numbers.

\medskip

Our purpose in writing this paper is to employ the properties of the polygamma functions to derive infinite series identities involving Fibonacci numbers, Lucas numbers and the Riemann zeta numbers. We will obtain sums such as
\[
\sum\limits_{j = 1}^\infty  {\frac{{(j + 1)(j + 2)}}{{3^j }}\zeta (j + 3)F_{2j} }  = \frac{{2\pi ^3 }}{{\sqrt 5 }}\tan \left( {\frac{{\pi \sqrt 5 }}{6}} \right)\sec ^2 \left( {\frac{{\pi \sqrt 5 }}{6}} \right)\,,
\]
and
\[
\sum\limits_{j = 1}^\infty  {( - 1)^{j - 1} j\zeta (j + 1)L_{j - 1} }  = \pi ^2 \sec ^2 \left( {\frac{{\pi \sqrt 5 }}{2}} \right) - 3\,,
\]
and, in fact, more general series. Here $\zeta(n)$ is the Riemann zeta function.

\medskip
The digamma function, $\psi(z)$, is the logarithmic derivative of the Gamma function:
\[
\psi (z) = \frac{d}{{dz}}\log \Gamma (z) = \frac{{\Gamma '(z)}}{{\Gamma (z)}}\,,
\]
where the Gamma function is defined for $\Re(z)>0$ by
\[
\Gamma (z) = \int_0^\infty  {e^{ - t} t^{z - 1}\, dt}  = \int_0^\infty  {\left( {\log (1/t)} \right)^{z - 1}\, dt}\,,
\]
and is extended to the rest of the complex plane, excluding the non-positive integers, by analytic continuation. The Gamma function has a simple pole at each of the points $z=\cdots,-3,-2,-1,0$. The Gamma function extends the classical factorial function to the complex plane: $\Gamma(z)=(z-1)!$ 

\medskip

The nth polygamma function $\psi^{(n)}(z)$ is the nth derivative of the digamma function:
\[
\psi ^{(n)} (z) = \frac{{d^{n + 1} }}{{dz^{n + 1} }}\log \Gamma (z) = \frac{{d^n }}{{dz^n }}\psi ^{(n)} (z),\quad\psi ^{(0)} (z) = \psi (z)\,.
\]

The polygamma functions satisfy the recurrence relation,
\begin{equation}\label{eq.iow3v9r}
\psi ^{(m)} (z + 1) = \psi ^{(m)} (z) + ( - 1)^m \frac{{m!}}{{z^{m + 1} }}\,,
\end{equation}
and the reflection relation,
\begin{equation}\label{eq.ebduq6i}
( - 1)^m \psi ^{(m)} (1 - z) - \psi ^{(m)} (z) = \pi \frac{{d^m }}{{dz^m }}\cot (\pi z)\,.
\end{equation}
The Taylor series for the polygamma functions is
\begin{equation}\label{eq.vj6pc4s}
\sum\limits_{j = 0}^\infty  {( - 1)^{m + j + 1} \frac{{(m + j)!}}{{j!}}\zeta (m + j + 1)z^j }  = \psi ^{(m)} (z + 1),\quad m\ge1\,,
\end{equation}

\begin{equation}\label{eq.dicv5wt}
- \gamma  + \sum\limits_{j = 1}^\infty  {( - 1)^{j + 1} \zeta (j + 1)z^j }  = \psi (z + 1)\,,
\end{equation}
where $\gamma$ is the Euler-Mascheroni constant and $\zeta(n)$, $n\in\C$, is the Riemann zeta function,  defined by
\[
\zeta (n) = \sum_{j = 1}^\infty  {\frac{1}{{j^n }}},\quad\Re(n)>1\,,
\]
and analytically continued to all $n\in\C$ with $\Re(n)>0$, $n\ne1$ through
\[
\zeta (n) = \frac{1}{{1 - 2^{1 - n} }}\sum_{j = 1}^\infty  {\frac{{( - 1)^{j + 1} }}{{j^n }}}\,. 
\]
Further information on the polygamma functions can be found in the books by Erd\'elyi~et~al.~\cite[\S1.16]{erdelyi} and Srivastava and Choi~\cite[p.~33]{srivastava12}. The book by Edwards~\cite{edwards74} is a good treatise on the Riemann zeta function.

\medskip

Infinite series involving Fibonacci numbers and Riemann zeta numbers were also derived by Frontczak~\cite{frontczak20a,frontczak20c,frontczak20d}, Frontczak and Goy~\cite{frontczak20b} and Adegoke~\cite{adegoke20x}.
\section{Preliminary results}
Here we derive more functional equations for the polygamma functions. We also evaluate required linear combinations of the polygamma function at appropriate arguments.
\subsection{Functional equations}
Writing $-z$ for $z$ in the recurrence relation~\eqref{eq.iow3v9r} and making use of the reflection relation~\eqref{eq.ebduq6i}, we obtain the duplication formula
\begin{equation}\label{eq.bbzg3c9}
\psi ^{(m)} ( - z) - ( - 1)^m \psi ^{(m)} (z) = ( - 1)^m \pi \frac{{d^m }}{{dz^m }}\cot (\pi z) + \frac{{m!}}{{z^{m + 1} }}\,,
\end{equation}
and consequently,
\begin{equation}
\begin{split}
\psi ^{(m)} ( - x) - \psi ^{(m)} ( - y) &= \psi ^{(m)} (x) - \psi ^{(m)} (y)   + \frac{{m!}}{{x^{m + 1} }} - \frac{{m!}}{{y^{m + 1} }}\\
&\qquad+ \pi \left. {\frac{{d^m }}{{dz^m }}\cot (\pi z)} \right|_{z = x}  - \pi \left. {\frac{{d^m }}{{dz^m }}\cot (\pi z)} \right|_{z = y},\quad\mbox{$m$ even}\,,
\end{split}
\end{equation}

\begin{equation}
\begin{split}
\psi ^{(m)} ( - x) + \psi ^{(m)} ( - y) &= -(\psi ^{(m)} (x) + \psi ^{(m)} (y))   + \frac{{m!}}{{x^{m + 1} }} + \frac{{m!}}{{y^{m + 1} }}\\
&\qquad- \pi \left. {\frac{{d^m }}{{dz^m }}\cot (\pi z)} \right|_{z = x}  - \pi \left. {\frac{{d^m }}{{dz^m }}\cot (\pi z)} \right|_{z = y},\quad\mbox{$m$ odd}\,.
\end{split}
\end{equation}
In particular, if $x+y=1$, then,
\begin{equation}\label{eq.wosq31s}
\psi ^{(m)} ( - x) - \psi ^{(m)} ( - y) =  - \pi \left. {\frac{{d^m }}{{dz^m }}\cot (\pi z)} \right|_{z = y}  + \frac{{m!}}{{x^{m + 1} }} - \frac{{m!}}{{y^{m + 1} }},\quad\mbox{$m$ even}\,,
\end{equation}

\begin{equation}\label{eq.ed8n3gz}
\psi ^{(m)} ( - x) + \psi ^{(m)} ( - y) =  - \pi \left. {\frac{{d^m }}{{dz^m }}\cot (\pi z)} \right|_{z = y}  + \frac{{m!}}{{x^{m + 1} }} + \frac{{m!}}{{y^{m + 1} }},\quad\mbox{$m$ odd}\,.
\end{equation}
Writing $1/2-z$ for $z$ in~\eqref{eq.ebduq6i} gives
\begin{equation}\label{eq.jbtkz4p}
( - 1)^m \psi ^{(m)} \left( {\frac{1}{2} + z} \right) - \psi ^{(m)} \left( {\frac{1}{2} - z} \right) = \pi \left. {\frac{{d^m }}{{dx^m }}\cot (\pi x)} \right|_{x = 1/2 - z}\,.
\end{equation}
Eliminating $\psi^{(m)}(z)$ between \eqref{eq.iow3v9r} and \eqref{eq.ebduq6i} gives
\begin{equation}\label{eq.owmf0ko}
\psi ^{(m)} (1 + z) - ( - 1)^m \psi ^{(m)} (1 - z) =  - \pi \frac{{d^m }}{{dz^m }}\cot (\pi z) + \frac{{( - 1)^m m!}}{{z^{m + 1} }}\,.
\end{equation}
If $x-y=1$, then the recurrence relation gives
\begin{equation}\label{eq.qx2q4bx}
\psi ^{(m)} (x) - \psi ^{(m)} (y) = \frac{{( - 1)^m m!}}{{y^{m + 1} }}\,,
\end{equation}
while if $x+y=1$, the reflection relation gives
\begin{equation}\label{eq.nep5z0n}
\psi ^{(m)} (x) - ( - 1)^m \psi ^{(m)} (y) =  - \pi \left. {\frac{{d^m }}{{dz^m }}\cot (\pi z)} \right|_{z = x}\,.
\end{equation}
The recurrence relation has the following consequences:
\begin{equation}\label{eq.n0r48hk}
\begin{split}
\psi ^{(m)} (x + 1) - \psi ^{(m)} (y + 1) &= \psi ^{(m)} (x) - \psi ^{(m)} (y)\\
&\qquad- ( - 1)^m m!\frac{{x^{m + 1}  - y^{m + 1} }}{{(xy)^{m + 1} }}\,,
\end{split}
\end{equation}

\begin{equation}\label{eq.a6idmpt}
\begin{split}
\psi ^{(m)} (x + 1) + \psi ^{(m)} (y + 1) &= \psi ^{(m)} (x) + \psi ^{(m)} (y)\\
&\qquad+ ( - 1)^m m!\frac{{x^{m + 1}  + y^{m + 1} }}{{(xy)^{m + 1} }}\,.
\end{split}
\end{equation}
For any $x$ and $y$ such that $1-x\notin\Z^{-}$, $1-y\notin\Z^{-}$ and $x\ne1$, $y\ne1$, relation~\eqref{eq.owmf0ko} implies
\begin{equation}\label{eq.tu58ve4}
\begin{split}
\psi ^{(m)} (1 + x) - \psi ^{(m)} (1 + y) &= ( - 1)^m \left( {\psi ^{(m)} (1 - x) - \psi ^{(m)} (1 - y)} \right)\\
&\quad- \pi \frac{{d^m }}{{dx^m }}\cot (\pi x) + \pi \frac{{d^m }}{{dy^m }}\cot (\pi y)\\
&\qquad+ \frac{{( - 1)^m m!}}{{x^{m + 1} }} - \frac{{( - 1)^m m!}}{{y^{m + 1} }}\,,
\end{split}
\end{equation}

\begin{equation}\label{eq.jbsc0ez}
\begin{split}
\psi ^{(m)} (1 + x) + \psi ^{(m)} (1 + y) &= ( - 1)^m \left( {\psi ^{(m)} (1 - x) + \psi ^{(m)} (1 - y)} \right)\\
&\quad- \pi \frac{{d^m }}{{dx^m }}\cot (\pi x) - \pi \frac{{d^m }}{{dy^m }}\cot (\pi y)\\
&\qquad+ \frac{{( - 1)^m m!}}{{x^{m + 1} }} + \frac{{( - 1)^m m!}}{{y^{m + 1} }}\,.
\end{split}
\end{equation}
From the functional equation~\eqref{eq.tu58ve4}, it follows that if $m$ is even and $x+y=1$, then,
\begin{equation}\label{eq.qvsfpnz}
\psi ^{(m)} (1 + x) - \psi ^{(m)} (1 + y) = \pi \left. {\frac{{d^m }}{{dz^m }}\cot (\pi z)} \right|_{z = y}  + \frac{{m!}}{{x^{m + 1} }} - \frac{{m!}}{{y^{m + 1} }}\,.
\end{equation}
From the functional equation~\eqref{eq.jbsc0ez}, it follows that if $m$ is odd and $x+y=1$, then,
\begin{equation}\label{eq.csjls8y}
\psi ^{(m)} (1 + x) + \psi ^{(m)} (1 + y) = \pi \left. {\frac{{d^m }}{{dz^m }}\cot (\pi z)} \right|_{z = y}  - \frac{{m!}}{{x^{m + 1} }} - \frac{{m!}}{{y^{m + 1} }}\,.
\end{equation}
If $m$ is an even number and $x+y=2$, equation~\eqref{eq.tu58ve4} gives
\begin{equation}\label{eq.i9bj8xc}
\psi ^{(m)} (1 + x) - \psi ^{(m)} (1 + y) =- \pi \left. {\frac{{d^m }}{{dz^m }}\cot (\pi z)} \right|_{z = x}  + \frac{{m!}}{{(1 - y)^{m + 1} }} + \frac{{m!}}{{x^{m + 1} }} - \frac{{m!}}{{y^{m + 1} }}\,,
\end{equation}
while if $m$ is an odd number and $x+y=2$, equation~\eqref{eq.jbsc0ez} gives
\begin{equation}\label{eq.cd0jgko}
\psi ^{(m)} (1 + x) + \psi ^{(m)} (1 + y) =- \pi \left. {\frac{{d^m }}{{dz^m }}\cot (\pi z)} \right|_{z = x}  - \frac{{m!}}{{(1 - y)^{m + 1} }} - \frac{{m!}}{{x^{m + 1} }} - \frac{{m!}}{{y^{m + 1} }}\,.
\end{equation}
Note that we used
\[
\pi \left. {\frac{{d^m }}{{dz^m }}\cot (\pi z)} \right|_{z = y}  + \pi \left. {\frac{{d^m }}{{dz^m }}\cot (\pi z)} \right|_{z = 1 - y}=0,\quad\mbox{$m$ even}\,,
\]
and
\[
- \pi \left. {\frac{{d^m }}{{dz^m }}\cot (\pi z)} \right|_{z = y}  + \pi \left. {\frac{{d^m }}{{dz^m }}\cot (\pi z)} \right|_{z = 1 - y}=0,\quad\mbox{$m$ odd}
\]
From~\eqref{eq.jbtkz4p} we have
\begin{equation}\label{eq.wemv3k4}
\begin{split}
( - 1)^m \left( {\psi ^{(m)} \left( {\frac{1}{2} + x} \right) - \psi ^{(m)} \left( {\frac{1}{2} + y} \right)} \right) &= \psi ^{(m)} \left( {\frac{1}{2} - x} \right) - \psi ^{(m)} \left( {\frac{1}{2} - y} \right)\\
&\qquad + \pi \left. {\frac{{d^m }}{{dz^m }}\cot (\pi z)} \right|_{z = 1/2 - x}  - \pi \left. {\frac{{d^m }}{{dz^m }}\cot (\pi z)} \right|_{z = 1/2 - y}\,, 
\end{split}
\end{equation}

\begin{equation}\label{eq.mqjw8dw}
\begin{split}
( - 1)^m \left( {\psi ^{(m)} \left( {\frac{1}{2} + x} \right) + \psi ^{(m)} \left( {\frac{1}{2} + y} \right)} \right) &= -\psi ^{(m)} \left( {\frac{1}{2} - x} \right) - \psi ^{(m)} \left( {\frac{1}{2} - y} \right)\\
&\qquad + \pi \left. {\frac{{d^m }}{{dz^m }}\cot (\pi z)} \right|_{z = 1/2 - x}  + \pi \left. {\frac{{d^m }}{{dz^m }}\cot (\pi z)} \right|_{z = 1/2 - y}\,. 
\end{split}
\end{equation}
Thus, if $m$ is even and $x+y=1$, then from~\eqref{eq.wemv3k4} and the duplication formula, we have
\begin{equation}\label{eq.ptlkzfr}
\psi ^{(m)} \left( {\frac{1}{2} + x} \right) - \psi ^{(m)} \left( {\frac{1}{2} + y} \right) =  - \pi \left. {\frac{{d^m }}{{dz^m }}\cot (\pi z)} \right|_{z = 1/2 - y}  + \frac{{m!}}{{(x - {1 \mathord{\left/
 {\vphantom {1 2}} \right.
 \kern-\nulldelimiterspace} 2})^{m + 1} }}\,,
\end{equation}
and if $m$ is odd and $x+y=1$, then from~\eqref{eq.mqjw8dw} and the duplication formula, we have
\begin{equation}\label{eq.h46a6f3}
\psi ^{(m)} \left( {\frac{1}{2} + x} \right) + \psi ^{(m)} \left( {\frac{1}{2} + y} \right) =  - \pi \left. {\frac{{d^m }}{{dz^m }}\cot (\pi z)} \right|_{z = 1/2 - y}  - \frac{{m!}}{{(x - {1 \mathord{\left/
 {\vphantom {1 2}} \right.
 \kern-\nulldelimiterspace} 2})^{m + 1} }}\,.
\end{equation}
\subsection{Evaluation at various arguments}
\begin{lemma}
We have
\begin{equation}\label{eq.f078k0u}
\psi ^{(m)} (\alpha ) - \psi ^{(m)} (\beta ) =  - \pi \left. {\frac{{d^m }}{{dz^m }}\cot (\pi z)} \right|_{z = \alpha },\quad\mbox{$m$ even}\,,
\end{equation}

\begin{equation}\label{eq.liqqoev}
\psi ^{(m)} (\alpha ) + \psi ^{(m)} (\beta ) =  - \pi \left. {\frac{{d^m }}{{dz^m }}\cot (\pi z)} \right|_{z = \alpha },\quad\mbox{$m$ odd}\,,
\end{equation}

\begin{equation}\label{eq.t3bna0c}
\psi ^{(m)} (\alpha ^2 ) - \psi ^{(m)} (\beta ^2 ) = \pi \left. {\frac{{d^m }}{{dz^m }}\cot (\pi z)} \right|_{z = \beta }  + m!F_{m + 1} \sqrt 5,\quad\mbox{$m$ even}\,,
\end{equation}

\begin{equation}\label{eq.fn3e46b}
\psi ^{(m)} (\alpha ^2 ) + \psi ^{(m)} (\beta ^2 ) =  - \pi \left. {\frac{{d^m }}{{dz^m }}\cot (\pi z)} \right|_{z = \beta }  - m!L_{m + 1},\quad\mbox{$m$ odd}\,,
\end{equation}

\begin{equation}\label{eq.upo3xfi}
\psi ^{(m)} (\alpha ^3 ) - \psi ^{(m)} (\beta ^3 ) = \pi \left. {\frac{{d^m }}{{dz^m }}\cot (\pi z)} \right|_{z = 2\beta } + \frac{{m!}}{{(\sqrt 5 )^{m + 1} }} + \frac{{m!}}{{2^{m + 1} }}F_{m + 1} \sqrt 5,\quad\mbox{$m$ even}\,, 
\end{equation}

\begin{equation}\label{eq.uunsu5d}
\psi ^{(m)} (\alpha ^3 ) + \psi ^{(m)} (\beta ^3 ) =  - \pi \left. {\frac{{d^m }}{{dz^m }}\cot (\pi z)} \right|_{z = 2\beta } - \frac{{m!}}{{(\sqrt 5 )^{m + 1} }} - \frac{{m!}}{{2^{m + 1} }}L_{m + 1},\quad\mbox{$m$ odd}\,, 
\end{equation}
\begin{equation}\label{eq.e4vm6jo}
\psi ^{(m)} ({{\alpha ^3 } \mathord{\left/
 {\vphantom {{\alpha ^3 } 2}} \right.
 \kern-\nulldelimiterspace} 2}) - \psi ^{(m)} ({{\beta ^3 } \mathord{\left/
 {\vphantom {{\beta ^3 } 2}} \right.
 \kern-\nulldelimiterspace} 2}) =  - \pi \left. {\frac{{d^m }}{{dz^m }}\cot (\pi z)} \right|_{{{z = \sqrt 5 } \mathord{\left/
 {\vphantom {{z = \sqrt 5 } 2}} \right.
 \kern-\nulldelimiterspace} 2}}  + \frac{{m!2^{m + 1} }}{{(\sqrt 5 )^{m + 1} }},\quad\mbox{$m$ even}\,,
\end{equation}

\begin{equation}\label{eq.azxwtdt}
\psi ^{(m)} ({{\alpha ^3 } \mathord{\left/
 {\vphantom {{\alpha ^3 } 2}} \right.
 \kern-\nulldelimiterspace} 2}) + \psi ^{(m)} ({{\beta ^3 } \mathord{\left/
 {\vphantom {{\beta ^3 } 2}} \right.
 \kern-\nulldelimiterspace} 2}) =  - \pi \left. {\frac{{d^m }}{{dz^m }}\cot (\pi z)} \right|_{{{z = \sqrt 5 } \mathord{\left/
 {\vphantom {{z = \sqrt 5 } 2}} \right.
 \kern-\nulldelimiterspace} 2}}  - \frac{{m!2^{m + 1} }}{{(\sqrt 5 )^{m + 1} }},\quad\mbox{$m$ odd}\,,
\end{equation}

\begin{equation}\label{eq.kgq089n}
\psi ^{(m)} ({{\alpha ^r } \mathord{\left/
 {\vphantom {{\alpha ^r } {L_r }}} \right.
 \kern-\nulldelimiterspace} {L_r }}) - \psi ^{(m)} ({{\beta ^r } \mathord{\left/
 {\vphantom {{\beta ^r } {L_r }}} \right.
 \kern-\nulldelimiterspace} {L_r }}) =  -\pi \left. {\frac{{d^m }}{{dz^m }}\cot (\pi z)} \right|_{{{z = \alpha ^r } \mathord{\left/
 {\vphantom {{z = \alpha ^r } {L_r }}} \right.
 \kern-\nulldelimiterspace} {L_r }}},\quad\mbox{$m$ even}\,,
\end{equation}

\begin{equation}\label{eq.n4oblb2}
\psi ^{(m)} ({{\alpha ^r } \mathord{\left/
 {\vphantom {{\alpha ^r } {L_r }}} \right.
 \kern-\nulldelimiterspace} {L_r }}) + \psi ^{(m)} ({{\beta ^r } \mathord{\left/
 {\vphantom {{\beta ^r } {L_r }}} \right.
 \kern-\nulldelimiterspace} {L_r }}) =  - \pi \left. {\frac{{d^m }}{{dz^m }}\cot (\pi z)} \right|_{{{z = \alpha ^r } \mathord{\left/
 {\vphantom {{z = \alpha ^r } {L_r }}} \right.
 \kern-\nulldelimiterspace} {L_r }}},\quad\mbox{$m$ odd}\,,
\end{equation}

\begin{equation}\label{eq.ef1at22}
\psi ^{(m)} \left( {\frac{{\alpha ^r }}{{F_r \sqrt 5 }}} \right) - \psi ^{(m)} \left( {\frac{{\beta ^r }}{{F_r \sqrt 5 }}} \right) = ( - 1)^{rm + r + m} m!F_r^{m + 1} (\alpha ^r \sqrt 5 )^{m + 1}\,,
\end{equation}

\begin{equation}\label{eq.hm02cnz}
\begin{split}
&\psi ^{(m)} (1 + {{2\alpha ^r } \mathord{\left/
 {\vphantom {{2\alpha ^r } {L_r }}} \right.
 \kern-\nulldelimiterspace} {L_r }}) - \psi ^{(m)} (1 + {{2\beta ^r } \mathord{\left/
 {\vphantom {{2\beta ^r } {L_r }}} \right.
 \kern-\nulldelimiterspace} {L_r }})\\
&\qquad=  - \pi \left. {\frac{{d^m }}{{dz^m }}\cot (\pi z)} \right|_{{{z = 2\alpha ^r } \mathord{\left/
 {\vphantom {{z = 2\alpha ^r } {L_r }}} \right.
 \kern-\nulldelimiterspace} {L_r }}}+ \frac{{m!L_r^{m + 1} }}{{(F_r \sqrt 5 )^{m + 1} }} - \frac{{( - 1)^r m!L_r^{m + 1} F_{r(m + 1)} \sqrt 5 }}{{2^{m + 1} }},\quad\mbox{$m$ even}\,,
\end{split}
\end{equation}

\begin{equation}\label{eq.lk3xiqf}
\begin{split}
&\psi ^{(m)} (1 + {{2\alpha ^r } \mathord{\left/
 {\vphantom {{2\alpha ^r } {L_r }}} \right.
 \kern-\nulldelimiterspace} {L_r }}) + \psi ^{(m)} (1 + {{2\beta ^r } \mathord{\left/
 {\vphantom {{2\beta ^r } {L_r }}} \right.
 \kern-\nulldelimiterspace} {L_r }})\\
&\qquad=  - \pi \left. {\frac{{d^m }}{{dz^m }}\cot (\pi z)} \right|_{{{z = 2\alpha ^r } \mathord{\left/
 {\vphantom {{z = 2\alpha ^r } {L_r }}} \right.
 \kern-\nulldelimiterspace} {L_r }}}- \frac{{m!L_r^{m + 1} }}{{(F_r \sqrt 5 )^{m + 1} }} - \frac{{m!L_r^{m + 1} L_{r(m + 1)}}}{{2^{m + 1} }},\quad\mbox{$m$ odd}\,,
\end{split}
\end{equation}

\begin{equation}\label{eq.st4ngz1}
\begin{split}
&\psi ^{(m)} ( - {{\alpha ^r } \mathord{\left/
 {\vphantom {{\alpha ^r } {L_r }}} \right.
 \kern-\nulldelimiterspace} {L_r }}) - \psi ^{(m)} ( - {{\beta ^r } \mathord{\left/
 {\vphantom {{\beta ^r } {L_r }}} \right.
 \kern-\nulldelimiterspace} {L_r }})\\
&\qquad=  - \pi \left. {\frac{{d^m }}{{dz^m }}\cot (\pi z)} \right|_{z = {{\beta ^r } \mathord{\left/
 {\vphantom {{\beta ^r } {L_r }}} \right.
 \kern-\nulldelimiterspace} {L_r }}}  - m!L_r^{m + 1} ( - 1)^r F_{r(m + 1)} \sqrt 5,\quad\mbox{$m$ even}\,,
\end{split}
\end{equation}

\begin{equation}\label{eq.vnyktlk}
\begin{split}
&\psi ^{(m)} ( - {{\alpha ^r } \mathord{\left/
 {\vphantom {{\alpha ^r } {L_r }}} \right.
 \kern-\nulldelimiterspace} {L_r }}) + \psi ^{(m)} ( - {{\beta ^r } \mathord{\left/
 {\vphantom {{\beta ^r } {L_r }}} \right.
 \kern-\nulldelimiterspace} {L_r }})\\
&\qquad=  - \pi \left. {\frac{{d^m }}{{dz^m }}\cot (\pi z)} \right|_{z = {{\beta ^r } \mathord{\left/
 {\vphantom {{\beta ^r } {L_r }}} \right.
 \kern-\nulldelimiterspace} {L_r }}}  + m!L_r^{m + 1} L_{r(m + 1)},\quad\mbox{$m$ odd}\,. 
\end{split}
\end{equation}
\end{lemma}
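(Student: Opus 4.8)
The plan is to read each of the fourteen displayed identities as a single instance of a functional equation already proved in Section~2.1, specialised to arguments built from $\alpha$ and $\beta$, and then to rewrite the rational remainder that appears into Fibonacci or Lucas numbers via the Binet formulas together with $1/\alpha=-\beta$ and $1/\beta=-\alpha$. Throughout I would abbreviate $g(z)=\cot(\pi z)$ and record the three symmetries that follow at once from $\cot(\pi(z+1))=\cot(\pi z)$ and $\cot(-\pi z)=-\cot(\pi z)$, namely
\[
g^{(m)}(z+1)=g^{(m)}(z),\qquad g^{(m)}(-z)=(-1)^{m+1}g^{(m)}(z),\qquad g^{(m)}(1-z)=(-1)^{m+1}g^{(m)}(z).
\]
These let me move the cotangent argument produced by a functional equation to the one displayed in the statement.

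First I would isolate the elementary algebraic facts that decide which parent equation applies. From $\alpha^2-\alpha-1=0$ one has $\alpha^2=1+\alpha$, $\alpha^3=1+2\alpha$ and $\alpha^3/2=\tfrac12+\alpha$ (and the same for $\beta$), while $\alpha^n+\beta^n=L_n$, $\alpha^n-\beta^n=F_n\sqrt5$ give the normalisations
\[
\frac{\alpha^r}{L_r}+\frac{\beta^r}{L_r}=1,\quad \frac{\alpha^r}{F_r\sqrt5}-\frac{\beta^r}{F_r\sqrt5}=1,\quad \frac{2\alpha^r}{L_r}+\frac{2\beta^r}{L_r}=2,\quad 1-\frac{2\beta^r}{L_r}=\frac{F_r\sqrt5}{L_r}.
\]
With these, the assignment is forced: \eqref{eq.f078k0u}--\eqref{eq.liqqoev} and \eqref{eq.kgq089n}--\eqref{eq.n4oblb2} come from the sum-to-one reflection \eqref{eq.nep5z0n}; \eqref{eq.t3bna0c}--\eqref{eq.fn3e46b} from \eqref{eq.qvsfpnz}--\eqref{eq.csjls8y} with $x=\alpha$, $y=\beta$ (using $\alpha^2=1+\alpha$); \eqref{eq.upo3xfi}--\eqref{eq.uunsu5d} and \eqref{eq.hm02cnz}--\eqref{eq.lk3xiqf} from the sum-to-two equations \eqref{eq.i9bj8xc}--\eqref{eq.cd0jgko}; \eqref{eq.e4vm6jo}--\eqref{eq.azxwtdt} from the half-integer-shift equations \eqref{eq.ptlkzfr}--\eqref{eq.h46a6f3}; \eqref{eq.ef1at22} from the difference-of-one recurrence \eqref{eq.qx2q4bx}; and \eqref{eq.st4ngz1}--\eqref{eq.vnyktlk} from the negative-argument equations \eqref{eq.wosq31s}--\eqref{eq.ed8n3gz}.

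Second, in each case I would substitute and collapse the remainder. The single computation repeated everywhere is
\[
\frac{1}{\alpha^{r(m+1)}}-\frac{1}{\beta^{r(m+1)}}=(-1)^{r(m+1)}\bigl(\beta^{r(m+1)}-\alpha^{r(m+1)}\bigr)=-(-1)^{r(m+1)}F_{r(m+1)}\sqrt5,
\]
and its plus-sign companion, which yields $(-1)^{r(m+1)}L_{r(m+1)}$. Reading these according to the parity of $m$ (so $(-1)^{r(m+1)}$ is $(-1)^r$ for $m$ even and $1$ for $m$ odd) turns every remainder $m!/x^{m+1}\pm m!/y^{m+1}$ into the stated Fibonacci or Lucas expression; for example \eqref{eq.t3bna0c}'s remainder $m!/\alpha^{m+1}-m!/\beta^{m+1}$ becomes $m!F_{m+1}\sqrt5$, while the extra term $m!/(1-y)^{m+1}$ in \eqref{eq.upo3xfi} is just $m!/(\sqrt5)^{m+1}$ because $1-2\beta=\alpha-\beta=\sqrt5$, and in \eqref{eq.e4vm6jo} the shift gives $x-\tfrac12=\sqrt5/2$, hence $m!2^{m+1}/(\sqrt5)^{m+1}$.

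The computation is routine once this dictionary is fixed, and the genuine obstacle is the sign bookkeeping. Three independent parity factors interact in each identity: the $(-1)^m$ from the reflection or recurrence, the $(-1)^{m+1}$ from the cotangent symmetry used to relocate the argument, and the $(-1)^{r(m+1)}$ from inverting powers of $\alpha$ and $\beta$. A single slip converts an $F$ into an $L$ or flips the sign of the cotangent term, so I would run the even-$m$ and odd-$m$ branches separately in every case and check at the end that the parity of $m+1$, and of $r(m+1)$, lands the remainder on the correct Fibonacci or Lucas side.
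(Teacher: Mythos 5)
Your proposal is correct and follows essentially the same route as the paper: the paper's proof is precisely the list of substitutions you identify (e.g.\ $x=\alpha$, $y=\beta$ in~\eqref{eq.nep5z0n} for the first pair, $x=2\alpha$, $y=2\beta$ in~\eqref{eq.i9bj8xc} and~\eqref{eq.cd0jgko} for the $\alpha^3$ pair, $x=\alpha^r/(F_r\sqrt5)$, $y=\beta^r/(F_r\sqrt5)$ in~\eqref{eq.qx2q4bx} for~\eqref{eq.ef1at22}, and so on), and your parent-equation assignments match it case for case. Your explicit treatment of the cotangent symmetries and the $(-1)^{r(m+1)}$ bookkeeping from $1/\alpha=-\beta$, $1/\beta=-\alpha$ simply spells out details the paper leaves implicit, and your computations check out.
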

\begin{proof}
To prove~\eqref{eq.f078k0u} and~\eqref{eq.liqqoev}, set $x=\alpha$, $y=\beta$ in~\eqref{eq.nep5z0n}. To prove~\eqref{eq.t3bna0c} and~\eqref{eq.fn3e46b}, set $x=\alpha$, $y=\beta$ in~\eqref{eq.qvsfpnz} and~\eqref{eq.csjls8y}. Setting $x=2\alpha$, $y=2\beta$ in~\eqref{eq.i9bj8xc} and~\eqref{eq.cd0jgko} gives~\eqref{eq.upo3xfi} and~\eqref{eq.uunsu5d}. To prove~\eqref{eq.e4vm6jo} and~\eqref{eq.azxwtdt} set $x=\alpha$, $y=\beta$ in~\eqref{eq.ptlkzfr} and~\eqref{eq.h46a6f3}. Use of $x=\alpha^r/L_r$ and $y=\beta^r/L_r$ in~\eqref{eq.nep5z0n} gives~\eqref{eq.kgq089n} and~\eqref{eq.n4oblb2}. Identity~\eqref{eq.ef1at22} is obtained by setting $x=\alpha^r/(F_r\sqrt 5)$ and $y=\beta^r/(F_r\sqrt 5)$ in~\eqref{eq.qx2q4bx}. Identities~\eqref{eq.hm02cnz} and~\eqref{eq.lk3xiqf} follow from~\eqref{eq.i9bj8xc} and~\eqref{eq.cd0jgko}, upon setting $x=2\alpha^r/L_r$ and $y=2\beta^r/L_r$. Identities~\eqref{eq.st4ngz1} and~\eqref{eq.vnyktlk} are obtained from~\eqref{eq.wosq31s} and~\eqref{eq.ed8n3gz}, with $x=\alpha^r/L_r$ and $y=\beta^r/L_r$.
\end{proof}
\section{Main results}
\begin{theoremUn}
If $r$ and $m$ are integers, then,
\[\tag{F}
\begin{split}
&\sum\limits_{j = 1}^\infty  {( - 1)^{j + 1} \frac{{(m + j)!}}{{j!}}\zeta (m + j + 1)F_{rj} z^j }\\
&\qquad= \frac{(-1)^m}{{\sqrt 5 }}\left\{ {\psi ^{(m)} (1 + \alpha ^r z) - \psi ^{(m)} (1 + \beta ^r z)} \right\},\quad\mbox{$m\ge0$}\,,
\end{split}
\]

\[\tag{L}
\begin{split}
&\sum\limits_{j = 0}^\infty  {( - 1)^{j} \frac{{(m + j)!}}{{j!}}\zeta (m + j + 1)L_{rj} z^j }\\
&\qquad = (-1)^{m - 1}\left\{\psi ^{(m)} (1 + \alpha ^r z) + \psi ^{(m)} (1 + \beta ^r z)\right\},\quad\mbox{$m\ge1$}\,.
\end{split}
\]
\end{theoremUn}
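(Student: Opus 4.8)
The plan is to prove both identities by the same mechanism: insert the Binet formulas $F_{rj} = (\alpha^{rj}-\beta^{rj})/\sqrt5$ and $L_{rj} = \alpha^{rj}+\beta^{rj}$, split each series into two single-argument power series in $\alpha^r z$ and $\beta^r z$, and recognize each as the Taylor expansion of a polygamma function via~\eqref{eq.vj6pc4s} (for $m\ge1$) or~\eqref{eq.dicv5wt} (for $m=0$). Throughout I assume $z$ lies in the common disk of convergence, $|z|<|\alpha|^{-|r|}$, since the expansion~\eqref{eq.vj6pc4s} of $\psi^{(m)}(w+1)$ about $w=0$ has radius of convergence $1$ (its nearest singularity is the pole at $w=-1$), so both $|\alpha^r z|<1$ and $|\beta^r z|<1$ are required.

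Consider (F) first, with $m\ge1$. Writing $w$ for either $\alpha^r z$ or $\beta^r z$, the key sign identity is $(-1)^{j+1}=(-1)^m(-1)^{m+j+1}$, which gives
\begin{equation*}
\sum_{j=1}^\infty (-1)^{j+1}\frac{(m+j)!}{j!}\zeta(m+j+1)\,w^j = (-1)^m\Bigl(\psi^{(m)}(1+w)-(-1)^{m+1}m!\,\zeta(m+1)\Bigr),
\end{equation*}
where I have used~\eqref{eq.vj6pc4s} and isolated its $j=0$ term $(-1)^{m+1}m!\,\zeta(m+1)$, which the series in (F) omits. Applying this with $w=\alpha^r z$ and $w=\beta^r z$ and subtracting, the two copies of the constant $m!\,\zeta(m+1)$ cancel, and dividing by the $\sqrt5$ from the Binet formula leaves exactly $\frac{(-1)^m}{\sqrt5}\bigl(\psi^{(m)}(1+\alpha^r z)-\psi^{(m)}(1+\beta^r z)\bigr)$. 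For the residual case $m=0$ the same splitting applies but with~\eqref{eq.dicv5wt} in place of~\eqref{eq.vj6pc4s}; the additive constant is now $\gamma$ rather than $m!\,\zeta(m+1)$, and it again cancels in the difference, yielding the $m=0$ instance of (F).

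For (L) the argument is cleaner because the summation already begins at $j=0$, so no term of~\eqref{eq.vj6pc4s} need be separated off. Using $(-1)^j=(-1)^{m+1}(-1)^{m+j+1}$ together with~\eqref{eq.vj6pc4s} gives
\begin{equation*}
\sum_{j=0}^\infty (-1)^{j}\frac{(m+j)!}{j!}\zeta(m+j+1)\,w^j = (-1)^{m+1}\psi^{(m)}(1+w),
\end{equation*}
and summing the $w=\alpha^r z$ and $w=\beta^r z$ versions (the plus sign coming from the Lucas Binet formula) produces $(-1)^{m-1}\bigl(\psi^{(m)}(1+\alpha^r z)+\psi^{(m)}(1+\beta^r z)\bigr)$, as claimed. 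The restriction $m\ge1$ here is essential: the $j=0$ term carries $\zeta(m+1)$, which would be the divergent $\zeta(1)$ if $m=0$; in (F), by contrast, the series starts at $j=1$ and so never meets $\zeta(1)$, which is exactly why (F) holds already for $m\ge0$.

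The computation is essentially bookkeeping, so the only genuine points requiring care are the two sign reconciliations $(-1)^{j+1}=(-1)^m(-1)^{m+j+1}$ and $(-1)^j=(-1)^{m+1}(-1)^{m+j+1}$, and the separate handling of the omitted $j=0$ term in (F) (together with the $\gamma$-shifted digamma series~\eqref{eq.dicv5wt} when $m=0$). I expect any slip to arise there rather than from any analytic difficulty, the convergence being guaranteed by the radius-$1$ estimate on~\eqref{eq.vj6pc4s} noted above.
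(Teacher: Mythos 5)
Your proof is correct and follows essentially the same route as the paper: substitute $w=\alpha^r z$ and $w=\beta^r z$ into the Taylor series~\eqref{eq.vj6pc4s}, then subtract (resp.\ add) and invoke the Binet formulas, with the sign bookkeeping $(-1)^m(-1)^{m+j+1}=(-1)^{j+1}$ handled identically. You are in fact slightly more careful than the paper, which cites only~\eqref{eq.vj6pc4s} (valid for $m\ge1$) and never addresses the $m=0$ case of (F); your explicit use of~\eqref{eq.dicv5wt} with the cancelling $\gamma$, and your convergence condition $|z|<|\alpha|^{-|r|}$, fill genuine small gaps in the published argument.
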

\begin{proof}
Writing $\alpha^rz$ for $z$ in the Taylor series~\eqref{eq.vj6pc4s}, we obtain
\begin{equation}\label{eq.xowv2u9}
\sum\limits_{j = 0}^\infty  {( - 1)^{j + 1} \frac{{(m + j)!}}{{j!}}\zeta (m + j + 1)\alpha ^{rj} z^j }  = (-1)^m\psi ^{(m)} (\alpha ^r z + 1)\,.
\end{equation}
Similarly,
\begin{equation}\label{eq.br32tgi}
\sum\limits_{j = 0}^\infty  {( - 1)^{j + 1} \frac{{(m + j)!}}{{j!}}\zeta (m + j + 1)\beta ^{rj} z^j }  = (-1)^m\psi ^{(m)} (\beta ^r z + 1)\,.
\end{equation}
Subtraction of~\eqref{eq.br32tgi} from~\eqref{eq.xowv2u9} gives {(F)} while their addition produces~{(L)}, on account of the Binet formulas.
\end{proof}
Note that, in view of identities~\eqref{eq.n0r48hk} and~\eqref{eq.a6idmpt}, the right hand side of {(F)} and {(L)} can be expressed as:
\begin{equation}\label{eq.e6t45xd}
\begin{split}
&\frac{1}{{\sqrt 5 }}\left\{ {\psi ^{(m)} (\alpha ^r z + 1) - \psi ^{(m)} (\beta ^r z + 1)} \right\}\\
&\qquad= \frac{1}{{\sqrt 5 }}\left\{ {\psi ^{(m)} (\alpha ^r z) - \psi ^{(m)} (\beta ^r z)} \right\} - \frac{{( - 1)^r m!}}{{z^{m + 1} }}F_{r(m + 1)},\quad\mbox{$m$ even}\,,
\end{split}
\end{equation}

\begin{equation}\label{eq.d4l10zr}
\begin{split}
&\psi ^{(m)} (\alpha ^r z + 1) + \psi ^{(m)} (\beta ^r z + 1)\\
&\qquad= \psi ^{(m)} (\alpha ^r z) + \psi ^{(m)} (\beta ^r z) - \frac{{m!}}{{z^{m + 1} }}L_{r(m + 1)},\quad\mbox{$m$ odd}\,.
\end{split}
\end{equation}
\begin{corollary}
We have
\begin{equation}\label{eq.nkfpd7u}
\begin{split}
&\sum\limits_{j = 1}^\infty  {( - 1)^{j + 1} \frac{{(m + j)!}}{{j!}}\frac{{\zeta (m + j + 1)}}{{2^j }}F_{3j} }\\
&\qquad=  - \frac{1}{{\sqrt 5 }}\left( {\left. {\pi \frac{{d^m }}{{dz^m }}\cot (\pi z)} \right|_{z = {{\sqrt 5 } \mathord{\left/
{\vphantom {{\sqrt 5 } 2}} \right.
\kern-\nulldelimiterspace} 2}}  - \frac{{m!\,2^{m + 1} }}{{(\sqrt 5 )^{m + 1} }}} \right) + m!\,2^{m + 1} F_{3m + 3},\quad\mbox{$m$ even}\,,
\end{split}
\end{equation}

\begin{equation}\label{eq.me7icdg}
\begin{split}
&\sum\limits_{j = 0}^\infty  {( - 1)^{j} \frac{{(m + j)!}}{{j!}}\frac{{\zeta (m + j + 1)}}{{2^j }}L_{3j} }\\
&\qquad=  -  {\left. {\pi \frac{{d^m }}{{dz^m }}\cot (\pi z)} \right|_{z = {{\sqrt 5 } \mathord{\left/
{\vphantom {{\sqrt 5 } 2}} \right.
\kern-\nulldelimiterspace} 2}}  - \frac{{m!\,2^{m + 1} }}{{(\sqrt 5 )^{m + 1} }}} - m!\,2^{m + 1} L_{3m + 3},\quad\mbox{$m$ odd}\,.
\end{split}
\end{equation}
\end{corollary}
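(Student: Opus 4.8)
The plan is to recognize both identities as the special case $r=3$, $z=\tfrac12$ of the master Theorem, and then to discharge the resulting polygamma expression by combining the shift relations~\eqref{eq.e6t45xd}--\eqref{eq.d4l10zr} with the Lemma evaluations~\eqref{eq.e4vm6jo}--\eqref{eq.azxwtdt}. The arithmetic fact that makes this work is $\alpha^3=2\alpha+1$ and $\beta^3=2\beta+1$, so that $\alpha^3/2=\tfrac12+\alpha$ and $\beta^3/2=\tfrac12+\beta$; this is precisely the pairing of arguments treated in~\eqref{eq.e4vm6jo} and~\eqref{eq.azxwtdt}, where the cotangent term is anchored at $z=\sqrt5/2=\alpha-\tfrac12=\tfrac12-\beta$. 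With $r=3$ and $z=\tfrac12$ the left-hand sides of~\eqref{eq.nkfpd7u} and~\eqref{eq.me7icdg} are exactly the series appearing on the left of {(F)} and {(L)}, since $F_{3\cdot0}=0$ lets the Fibonacci sum begin at $j=1$ while the Lucas sum retains its $j=0$ term.

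For the even case I would start from {(F)} with $r=3$, $z=\tfrac12$, whose right-hand side is $\tfrac{(-1)^m}{\sqrt5}\{\psi^{(m)}(1+\alpha^3/2)-\psi^{(m)}(1+\beta^3/2)\}$, equal to $\tfrac1{\sqrt5}\{\psi^{(m)}(1+\alpha^3/2)-\psi^{(m)}(1+\beta^3/2)\}$ because $m$ is even. Feeding this into the shift relation~\eqref{eq.e6t45xd} with $r=3$, $z=\tfrac12$ strips off the $+1$ in each argument at the cost of the term $-\tfrac{(-1)^3 m!}{(1/2)^{m+1}}F_{3(m+1)}=+\,m!\,2^{m+1}F_{3m+3}$, the sign flip coming from $(-1)^r=-1$. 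The remaining quantity $\tfrac1{\sqrt5}\{\psi^{(m)}(\alpha^3/2)-\psi^{(m)}(\beta^3/2)\}$ is then read off from~\eqref{eq.e4vm6jo}, producing the cotangent term together with $\tfrac{m!\,2^{m+1}}{(\sqrt5)^{m+1}}$; collecting the pieces yields~\eqref{eq.nkfpd7u}.

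The odd case runs in parallel: I would begin from {(L)} with $r=3$, $z=\tfrac12$, noting that $(-1)^{m-1}=1$ since $m$ is odd, so the right-hand side is simply $\psi^{(m)}(1+\alpha^3/2)+\psi^{(m)}(1+\beta^3/2)$. Applying the odd shift relation~\eqref{eq.d4l10zr} removes the $+1$ at the cost of $-\tfrac{m!}{(1/2)^{m+1}}L_{3(m+1)}=-\,m!\,2^{m+1}L_{3m+3}$, and then~\eqref{eq.azxwtdt} evaluates $\psi^{(m)}(\alpha^3/2)+\psi^{(m)}(\beta^3/2)$ as the cotangent term minus $\tfrac{m!\,2^{m+1}}{(\sqrt5)^{m+1}}$; assembling these reproduces~\eqref{eq.me7icdg}.

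Since every step is a direct substitution into results already established, there is no real analytic obstacle; the only thing to watch is the sign bookkeeping. Concretely, I expect the one genuinely error-prone point to be tracking the three independent sign sources---the parity factors $(-1)^m$ and $(-1)^{m-1}$ on the Theorem's right-hand side, the factor $(-1)^r=-1$ arising from $r=3$ in~\eqref{eq.e6t45xd}, and the signs of the $m!/(\sqrt5)^{m+1}$ terms in~\eqref{eq.e4vm6jo} and~\eqref{eq.azxwtdt}---and confirming that they conspire to leave the cotangent term with the displayed sign in each parity.
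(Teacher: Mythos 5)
Your proposal is correct and coincides with the paper's own proof, which likewise sets $r=3$, $z=\tfrac12$ in {(F)} and {(L)}, invokes the shift relations~\eqref{eq.e6t45xd} and~\eqref{eq.d4l10zr}, and evaluates the remaining polygamma combination via~\eqref{eq.e4vm6jo} and~\eqref{eq.azxwtdt}. Your sign bookkeeping (in particular $(-1)^r=-1$ producing $+\,m!\,2^{m+1}F_{3m+3}$ in the even case) checks out, so you have merely made explicit what the paper states in one line.
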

\begin{proof}
Set $r=3$, $z=1/2$ in {(F)} and {(L)}, noting  \eqref{eq.e6t45xd} and \eqref{eq.d4l10zr} and using~\eqref{eq.e4vm6jo} and~\eqref{eq.azxwtdt}.
\end{proof}
\begin{example}
We have
\begin{equation}\label{eq.s1a2ufl}
\begin{split}
&\sum\limits_{j = 1}^\infty  {( - 1)^{j + 1} \frac{{(j + 1)(j + 2)}}{{2^j }}\zeta (j + 3)F_{3j} }\\
&\qquad =  - \frac{{2\pi ^3 }}{{\sqrt 5 }}\cot \left( {\frac{{\pi \sqrt 5 }}{2}} \right)\csc ^2 \left( {\frac{{\pi \sqrt 5 }}{2}} \right) + \frac{{13616}}{{25}}\,,
\end{split}
\end{equation}

\begin{equation}\label{eq.ew630ib}
\sum\limits_{j = 0}^\infty  {( - 1)^j \frac{{(j + 1)}}{{2^j }}\zeta (j + 2)L_{3j} }  = \pi ^2 \csc ^2 \left( {\frac{{\pi \sqrt 5 }}{2}} \right) - \frac{{364}}{5}\,.
\end{equation}
\end{example}
\begin{proof}
To prove~\eqref{eq.s1a2ufl}, set $m=2$ in~\eqref{eq.nkfpd7u}, while $m=1$ in~\eqref{eq.me7icdg} proves~\eqref{eq.ew630ib}.
\end{proof}
\begin{corollary}
If $r$ is an integer, then,
\begin{equation}\label{eq.yx19wd0}
\begin{split}
&\sum\limits_{j = 1}^\infty  {( - 1)^{j + 1} \frac{{(m + j)!}}{{j!\,L_r^j }}\zeta (m + j + 1)F_{rj} }\\
&\qquad = \frac{\pi }{{\sqrt 5 }}\left. {\frac{{d^m }}{{dz^m }}\cot (\pi z)} \right|_{z = {{\beta ^r } \mathord{\left/
 {\vphantom {{\beta ^r } {L_r }}} \right.
 \kern-\nulldelimiterspace} {L_r }}}  - ( - 1)^r m!L_r^{m + 1} F_{r(m + 1)},\quad\mbox{$m$ even}\,,
\end{split}
\end{equation}

\begin{equation}\label{eq.s0dig90}
\begin{split}
&\sum\limits_{j = 0}^\infty  {( - 1)^{j} \frac{{(m + j)!}}{{j!\,L_r^j }}\zeta (m + j + 1)L_{rj} }\\
&\qquad=  - \pi \left. {\frac{{d^m }}{{dz^m }}\cot (\pi z)} \right|_{z = {{\beta ^r } \mathord{\left/
 {\vphantom {{\beta ^r } {L_r }}} \right.
 \kern-\nulldelimiterspace} {L_r }}}  - m!L_r^{m + 1} L_{r(m + 1)},\quad\mbox{$m$ odd}\,.
\end{split}
\end{equation}
\end{corollary}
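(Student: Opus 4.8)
The plan is to specialize the master identities (F) and (L) at $z=1/L_r$ and then collapse the resulting combinations of polygamma functions using the evaluations recorded in the Lemma. For the even case~\eqref{eq.yx19wd0} I would put $z=1/L_r$ in (F): the left-hand side becomes precisely $\sum_{j\ge1}(-1)^{j+1}\frac{(m+j)!}{j!\,L_r^j}\zeta(m+j+1)F_{rj}$, and since $m$ is even the factor $(-1)^m$ on the right is $1$, leaving $\frac{1}{\sqrt5}\{\psi^{(m)}(1+\alpha^r/L_r)-\psi^{(m)}(1+\beta^r/L_r)\}$. For the odd case~\eqref{eq.s0dig90} I would instead put $z=1/L_r$ in (L); with $m$ odd the factor $(-1)^{m-1}$ equals $1$, so the right-hand side is $\psi^{(m)}(1+\alpha^r/L_r)+\psi^{(m)}(1+\beta^r/L_r)$. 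In both cases the left-hand sides already match the claimed series verbatim.

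Next I would strip off the shift by $1$ in the arguments. Applying~\eqref{eq.e6t45xd} with its variable set to $1/L_r$ (so that $1/z^{m+1}=L_r^{m+1}$) rewrites the even combination as $\frac{1}{\sqrt5}\{\psi^{(m)}(\alpha^r/L_r)-\psi^{(m)}(\beta^r/L_r)\}-(-1)^r m!\,L_r^{m+1}F_{r(m+1)}$, and likewise~\eqref{eq.d4l10zr} rewrites the odd combination as $\psi^{(m)}(\alpha^r/L_r)+\psi^{(m)}(\beta^r/L_r)-m!\,L_r^{m+1}L_{r(m+1)}$. At this stage the Lemma finishes the polygamma evaluation outright: \eqref{eq.kgq089n} gives $\psi^{(m)}(\alpha^r/L_r)-\psi^{(m)}(\beta^r/L_r)=-\pi\left.\frac{d^m}{dz^m}\cot(\pi z)\right|_{z=\alpha^r/L_r}$ in the even case, and \eqref{eq.n4oblb2} gives the corresponding sum, equal to $-\pi\left.\frac{d^m}{dz^m}\cot(\pi z)\right|_{z=\alpha^r/L_r}$, in the odd case.

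The final step, which I expect to be the one demanding the most care, is to move the cotangent derivative from $z=\alpha^r/L_r$ to the advertised base point $z=\beta^r/L_r$. Here I would use $\alpha^r/L_r+\beta^r/L_r=(\alpha^r+\beta^r)/L_r=1$ together with the two parity relations for $\frac{d^m}{dz^m}\cot(\pi z)$ under $z\mapsto 1-z$ displayed just after~\eqref{eq.cd0jgko}: for $m$ even this gives $\left.\frac{d^m}{dz^m}\cot(\pi z)\right|_{z=\alpha^r/L_r}=-\left.\frac{d^m}{dz^m}\cot(\pi z)\right|_{z=\beta^r/L_r}$, flipping the $-\pi/\sqrt5$ factor to $+\pi/\sqrt5$ and yielding~\eqref{eq.yx19wd0}; for $m$ odd it gives equality of the two evaluations, producing~\eqref{eq.s0dig90} after retaining the $-m!\,L_r^{m+1}L_{r(m+1)}$ term. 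The principal obstacle is thus purely bookkeeping of the interlocking signs—the $(-1)^m$, $(-1)^{m-1}$, and $(-1)^r$ prefactors, the sign induced by the reflection of the cotangent, and the sign buried in the Lemma—rather than any analytic difficulty. A secondary caveat worth flagging is convergence: the series demand $|\alpha^r/L_r|<1$, so one should restrict to integers $r$ (for instance $r$ even, or $r<0$) for which the growth of $F_{rj}/L_r^j$ and $L_{rj}/L_r^j$ is controlled.
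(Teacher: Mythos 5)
Your proposal is correct and is essentially the paper's own proof: substitute $z=1/L_r$ into (F) and (L), strip the shift with \eqref{eq.e6t45xd} and \eqref{eq.d4l10zr}, evaluate the polygamma combinations with \eqref{eq.kgq089n} and \eqref{eq.n4oblb2}, and transport the cotangent derivative from $z=\alpha^r/L_r$ to $z=\beta^r/L_r$ via the parity relations displayed after \eqref{eq.cd0jgko} --- a step the paper's one-line proof leaves implicit but which you rightly identify as the main sign bookkeeping. One minor slip in your closing convergence caveat: since $|L_{-r}|=L_{r}$ and $|F_{-rj}|=F_{rj}$, negative odd $r$ diverges exactly as positive odd $r$ does, so genuine convergence forces $r$ even ($\alpha^{|r|}<L_{|r|}$ fails precisely for odd $|r|$), a restriction the paper itself silently ignores (e.g., its $m=1$, $r=1$ specialization \eqref{eq.fc0zaz6} only holds in an Abel-summed sense).
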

\begin{proof}
Set $z=1/L_r$ in {(F)} and {(L)}, noting  \eqref{eq.e6t45xd} and \eqref{eq.d4l10zr} and using~\eqref{eq.kgq089n} and~\eqref{eq.n4oblb2}.
\end{proof}
\begin{example}
We have
\begin{equation}\label{eq.s1u6y4q}
\begin{split}
&\sum\limits_{j = 1}^\infty  {( - 1)^{j + 1} \frac{{(j + 1)(j + 2)}}{{3^j }}\zeta (j + 3)F_{2j} }\\ 
&\qquad= \frac{{2\pi ^3 }}{{\sqrt 5 }}\tan \left( {\frac{{\pi \sqrt 5 }}{6}} \right)\sec ^2 \left( {\frac{{\pi \sqrt 5 }}{6}} \right) - 432\,,
\end{split}
\end{equation}

\begin{equation}\label{eq.fc0zaz6}
\sum\limits_{j = 1}^\infty  {( - 1)^{j - 1} j\zeta (j + 1)L_{j - 1} }  = \pi ^2 \sec ^2 \left( {\frac{{\pi \sqrt 5 }}{2}} \right) - 3\,.
\end{equation}
\end{example}
\begin{proof}
Set $m=2$, $r=2$ in~\eqref{eq.yx19wd0} to prove~\eqref{eq.s1u6y4q} and $m=1$, $r=1$ in~\eqref{eq.s0dig90} to prove~\eqref{eq.fc0zaz6}.
\end{proof}
\begin{corollary}
If $r$ is an integer with $|r|>1$, then,
\begin{equation}\label{eq.z19fv58}
\begin{split}
&\sum\limits_{j = 1}^\infty  {( - 1)^{j + 1} \frac{{(m + j)!}}{{j!\,L_r^j }}2^j \zeta (m + j + 1)F_{rj} }\\
&\qquad =  - \frac{1}{{\sqrt 5 }}\left\{ {\left. {\pi \frac{{d^m }}{{dz^m }}\cot (\pi z)} \right|_{z = {{2\alpha ^r } \mathord{\left/
 {\vphantom {{\beta ^r } {L_r }}} \right.
 \kern-\nulldelimiterspace} {L_r }}}  - \frac{{m!L_r^{m + 1} }}{{(F_r \sqrt 5 )^{m + 1} }}} \right\} - ( - 1)^r \frac{{m!L_r^{m + 1} F_{r(m + 1)} }}{{2^{m + 1} }},\quad\mbox{$m$ even}\,,
\end{split}
\end{equation}

\begin{equation}\label{eq.wx8xhdm}
\begin{split}
&\sum\limits_{j = 0}^\infty  {( - 1)^{j} \frac{{(m + j)!}}{{j!L_r^j }}2^j \zeta (m + j + 1)L_{rj} }\\
&\qquad=  - \left. {\pi \frac{{d^m }}{{dz^m }}\cot (\pi z)} \right|_{z = {{2\alpha ^r } \mathord{\left/
 {\vphantom {{2\alpha ^r } {L_r }}} \right.
 \kern-\nulldelimiterspace} {L_r }}}  - \frac{{m!L_r^{m + 1} }}{{(F_r \sqrt 5 )^{m + 1} }} - \frac{{m!L_r^{m + 1} L_{r(m + 1)} }}{{2^{m + 1} }},\quad\mbox{$m$ odd}\,.
\end{split}
\end{equation}
\end{corollary}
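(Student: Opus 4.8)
The plan is to obtain both \eqref{eq.z19fv58} and \eqref{eq.wx8xhdm} from a single specialization of the master identities (F) and (L), exactly as in the two preceding corollaries, but now with the choice $z=2/L_r$. With this choice $z^{j}=2^{j}/L_r^{j}$, so the left-hand sides of (F) and (L) become precisely the two series displayed in the statement, while the arguments on the right collapse to $1+\alpha^r z=1+2\alpha^r/L_r$ and $1+\beta^r z=1+2\beta^r/L_r$. These are exactly the arguments already evaluated in the Lemma: identity \eqref{eq.hm02cnz} records $\psi^{(m)}(1+2\alpha^r/L_r)-\psi^{(m)}(1+2\beta^r/L_r)$ for even $m$, and \eqref{eq.lk3xiqf} records $\psi^{(m)}(1+2\alpha^r/L_r)+\psi^{(m)}(1+2\beta^r/L_r)$ for odd $m$.

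The point I would stress is that, unlike the corollaries handling $z=1/2$ or $z=1/L_r$, here there is no need to pass through the shift relations \eqref{eq.e6t45xd}--\eqref{eq.d4l10zr}: the relevant Lemma evaluations already carry the $+1$ inside their arguments, so they can be substituted directly. For even $m$, (F) gives the $F$-series equal to $\tfrac{(-1)^m}{\sqrt5}\bigl\{\psi^{(m)}(1+2\alpha^r/L_r)-\psi^{(m)}(1+2\beta^r/L_r)\bigr\}$; inserting \eqref{eq.hm02cnz} and using $(-1)^m=1$ groups the $\cot$-derivative term together with $m!L_r^{m+1}/(F_r\sqrt5)^{m+1}$ under the prefactor $-1/\sqrt5$, which is the brace in \eqref{eq.z19fv58}. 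For odd $m$, (L) gives the $L$-series equal to $(-1)^{m-1}\bigl\{\psi^{(m)}(1+2\alpha^r/L_r)+\psi^{(m)}(1+2\beta^r/L_r)\bigr\}$; since $(-1)^{m-1}=1$ and \eqref{eq.lk3xiqf} is already in the exact shape of \eqref{eq.wx8xhdm}, the substitution is immediate.

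I do not anticipate a conceptual obstacle; the work is entirely bookkeeping of signs and constants, and that is the step I expect to require the most care. The two delicate points are the parity factors $(-1)^m$ and $(-1)^{m-1}$, which must be verified to equal $1$ in their respective cases so the sign pattern of the Lemma outputs is preserved, and the division of the final term of \eqref{eq.hm02cnz} by $\sqrt5$, which cancels the explicit $\sqrt5$ there and leaves $-(-1)^r m!L_r^{m+1}F_{r(m+1)}/2^{m+1}$. Finally, I would invoke the standing hypothesis $|r|>1$ precisely where \eqref{eq.hm02cnz} and \eqref{eq.lk3xiqf} were established, namely to guarantee $|L_r|>2$ and that the point $2\alpha^r/L_r$ avoids the integer poles of $\cot(\pi z)$, so that those evaluations may be quoted verbatim under the specialization $z=2/L_r$.
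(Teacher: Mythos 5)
Your proposal is correct and takes essentially the same route as the paper: its proof is precisely ``Set $z=2/L_r$ in (F) and (L) and use~\eqref{eq.hm02cnz} and~\eqref{eq.lk3xiqf}'', with no detour through \eqref{eq.e6t45xd}--\eqref{eq.d4l10zr}, exactly as you observe. Your bookkeeping of the parity factors $(-1)^m$, $(-1)^{m-1}$ and of the $\sqrt5$ cancellation in the final term of \eqref{eq.hm02cnz} checks out against the stated right-hand sides.
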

\begin{proof}
Set $z=2/L_r$ in {(F)} and {(L)} and use~\eqref{eq.hm02cnz} and~\eqref{eq.lk3xiqf}.
\end{proof}
\begin{example}
We have
\begin{equation}\label{eq.wi3ql4i}
\begin{split}
&\sum\limits_{j = 1}^\infty  {( - 1)^{j + 1} \frac{{(j + 1)(j + 2)}}{{2^j }}\zeta (j + 3)F_{3j} }\\ 
&\qquad= \frac{{13616}}{{25}} - \frac{{2\pi ^3 }}{{\sqrt 5 }}\cot \left( {\frac{{\pi \sqrt 5 }}{2}} \right)\csc ^2 \left( {\frac{{\pi \sqrt 5 }}{2}} \right)\,,
\end{split}
\end{equation}

\begin{equation}\label{eq.allakva}
\sum\limits_{j = 0}^\infty  {( - 1)^j \frac{{(j + 1)}}{{2^j }}\zeta (j + 2)L_{3j} }  = \pi ^2 \csc ^2 \left( {\frac{{\pi \sqrt 5 }}{2}} \right) - \frac{{364}}{5}\,.
\end{equation}
\end{example}
\begin{proof}
Set $m=2$, $r=3$ in~\eqref{eq.z19fv58} to prove~\eqref{eq.wi3ql4i} and $m=1$, $r=3$ in~\eqref{eq.wx8xhdm} to prove~\eqref{eq.allakva}.
\end{proof}
\begin{corollary}
If $r$ is an integer with $|r|>1$, then,
\begin{equation}\label{eq.vxo2yt0}
\sum\limits_{j = 1}^\infty  {\frac{{(m + j)!}}{{j!\,L_r^j }}\zeta (m + j + 1)F_{rj} }= \frac{\pi }{{\sqrt 5 }}\left. {\frac{{d^m }}{{dz^m }}\cot (\pi z)} \right|_{z = {{\beta ^r } \mathord{\left/
 {\vphantom {{\beta ^r } {L_r }}} \right.
 \kern-\nulldelimiterspace} {L_r }}},\quad\mbox{$m$ even}\,,
\end{equation}

\begin{equation}\label{eq.xk99wvg}
\sum\limits_{j = 0}^\infty  {\frac{{(m + j)!}}{{j!\,L_r^j }}\zeta (m + j + 1)L_{rj} }=  - \pi \left. {\frac{{d^m }}{{dz^m }}\cot (\pi z)} \right|_{z = {{\beta ^r } \mathord{\left/
 {\vphantom {{\beta ^r } {L_r }}} \right.
 \kern-\nulldelimiterspace} {L_r }}},\quad\mbox{$m$ odd}\,.
\end{equation}
\end{corollary}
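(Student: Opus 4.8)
The plan is to specialize the master identities (F) and (L) at $z=-1/L_r$. This is exactly the reflection of the substitution $z=1/L_r$ used in the corollary that produced \eqref{eq.yx19wd0} and \eqref{eq.s0dig90}, and the extra minus sign does two things at once: it cancels the alternating factor $(-1)^{j+1}$ (resp.\ $(-1)^j$) in the summand, converting the alternating sums of that corollary into the plain sums claimed here, and it shifts the polygamma arguments into a position where no recurrence correction survives.

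Concretely, I would first put $z=-1/L_r$ in (F). Since $z^j=(-1)^j/L_r^j$, the factor $(-1)^{j+1}z^j$ collapses to $-1/L_r^j$, so the left-hand side becomes $-\sum_{j\ge1}\frac{(m+j)!}{j!\,L_r^j}\zeta(m+j+1)F_{rj}$. On the right-hand side, the Binet relation $L_r=\alpha^r+\beta^r$ simplifies the arguments to $1+\alpha^r z=1-\alpha^r/L_r=\beta^r/L_r$ and $1+\beta^r z=1-\beta^r/L_r=\alpha^r/L_r$. Thus, for $m$ even, (F) reduces to $-\sum=\frac{1}{\sqrt5}\{\psi^{(m)}(\beta^r/L_r)-\psi^{(m)}(\alpha^r/L_r)\}$. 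Because $\beta^r/L_r+\alpha^r/L_r=1$, I can apply the reflection identity \eqref{eq.nep5z0n} with $x=\beta^r/L_r$, $y=\alpha^r/L_r$; for $m$ even this gives $\psi^{(m)}(\beta^r/L_r)-\psi^{(m)}(\alpha^r/L_r)=-\pi\,\frac{d^m}{dz^m}\cot(\pi z)|_{z=\beta^r/L_r}$, and rearranging yields exactly \eqref{eq.vxo2yt0}.

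The treatment of (L) for $m$ odd is parallel: at $z=-1/L_r$ the factor $(-1)^j z^j$ becomes $1/L_r^j$, so the left-hand side is the plain $L$-sum, and the right-hand side is $(-1)^{m-1}\{\psi^{(m)}(\beta^r/L_r)+\psi^{(m)}(\alpha^r/L_r)\}$, which for odd $m$ equals $\psi^{(m)}(\beta^r/L_r)+\psi^{(m)}(\alpha^r/L_r)$. Applying \eqref{eq.nep5z0n} again (now $-(-1)^m=+1$, so the two polygammas add) delivers $-\pi\,\frac{d^m}{dz^m}\cot(\pi z)|_{z=\beta^r/L_r}$, which is \eqref{eq.xk99wvg}. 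The point I would stress is that, unlike the earlier corollary where $z=1/L_r$ pushed the arguments to $1+\alpha^r/L_r$ and $1+\beta^r/L_r$ and thereby forced the recurrence corrections \eqref{eq.e6t45xd}--\eqref{eq.d4l10zr} (producing the extra $m!L_r^{m+1}F_{r(m+1)}$ and $m!L_r^{m+1}L_{r(m+1)}$ terms), here the arguments land precisely on $\beta^r/L_r$ and $\alpha^r/L_r$, so no such correction appears and the answer is purely the cotangent term.

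The step I would be most careful about is convergence. For even $r$ with $|r|>1$ one has $\alpha^r/L_r<1$, so $z=-1/L_r$ lies inside the disk of convergence of the Taylor series \eqref{eq.vj6pc4s} and the manipulation is valid termwise. For odd $|r|>1$, however, $\alpha^r/L_r>1$, so the series diverges in the ordinary sense and the identity must be read as the value furnished by analytic continuation of the left-hand side (equivalently, by the closed form on the right), as in the paper's other boundary-type evaluations. I would therefore note that the hypothesis $|r|>1$ is precisely what keeps $\beta^r/L_r$ and $\alpha^r/L_r$ away from the poles of $\psi^{(m)}$ — both are irrational and hence never non-positive integers — so that \eqref{eq.nep5z0n} applies, with the identity understood in this continued sense when $r$ is odd.
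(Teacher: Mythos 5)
Your proof is correct, but it takes a genuinely shorter route than the paper. The paper's proof also sets $z=-1/L_r$ in (F) and (L), but it then passes through the negative arguments: implicitly using the recurrence shifts \eqref{eq.e6t45xd} and \eqref{eq.d4l10zr} to replace $\psi^{(m)}(1-\alpha^r/L_r)$ and $\psi^{(m)}(1-\beta^r/L_r)$ by $\psi^{(m)}(-\alpha^r/L_r)$ and $\psi^{(m)}(-\beta^r/L_r)$, and then invoking the lemma entries \eqref{eq.st4ngz1} and \eqref{eq.vnyktlk}, whose $m!\,L_r^{m+1}(-1)^rF_{r(m+1)}\sqrt5$ and $m!\,L_r^{m+1}L_{r(m+1)}$ terms cancel against the recurrence corrections to leave the pure cotangent term. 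You instead exploit $1-\alpha^r/L_r=\beta^r/L_r$ and $1-\beta^r/L_r=\alpha^r/L_r$, so the arguments land directly on a complementary pair summing to $1$, and a single application of the reflection identity \eqref{eq.nep5z0n} (equivalently, \eqref{eq.kgq089n} and \eqref{eq.n4oblb2}, up to the symmetry of $\frac{d^m}{dz^m}\cot(\pi z)$ under $z\mapsto 1-z$) finishes the proof with no correction terms at all. Your route buys economy and a structural explanation of why no $F_{r(m+1)}$ or $L_{r(m+1)}$ terms survive; the paper's route buys nothing extra here beyond reusing lemma entries it had already prepared. Your convergence discussion is a genuine improvement on the paper, which is silent on the issue, and your observation that for odd $r$ the series of positive terms grows like $(\alpha^r/L_r)^j$ and so diverges is accurate — note the paper's examples use only $r=2$ and $r=4$. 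One small slip: your final sentence claims the hypothesis $|r|>1$ is ``precisely'' what keeps $\alpha^r/L_r$ and $\beta^r/L_r$ off the poles of $\psi^{(m)}$, but, as your own parenthetical concedes, irrationality already guarantees that for every $r\ne0$; nor does $|r|>1$ restore ordinary convergence for odd $r$. So the hypothesis does not play the role you assign it, and the honest statement is that for odd $|r|>1$ the identity holds only in the analytically continued sense you describe. This is a wording blemish, not a gap in the argument.
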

\begin{proof}
Set $z=-1/L_r$ in {(F)} and {(L)} and use~\eqref{eq.st4ngz1} and~\eqref{eq.vnyktlk}.
\end{proof}
\begin{example}
We have
\begin{equation}\label{eq.pnj960x}
\sum\limits_{j = 1}^\infty  {\frac{{(j + 1)(j + 2)}}{{3^j }}\zeta (j + 3)F_{2j} }  = \frac{{2\pi ^3 }}{{\sqrt 5 }}\tan \left( {\frac{{\pi \sqrt 5 }}{6}} \right)\sec ^2 \left( {\frac{{\pi \sqrt 5 }}{6}} \right)\,,
\end{equation}

\begin{equation}\label{eq.gluacxg}
\sum\limits_{j = 0}^\infty  {\frac{{(j + 1)}}{{7^j }}\zeta (j + 2)L_{4j} }  = \pi ^2 \sec ^2 \left( {\frac{{3\pi \sqrt 5 }}{{14}}} \right)\,.
\end{equation}
\end{example}
\begin{proof}
To prove~\eqref{eq.pnj960x}, set $m=2$, $r=2$ in~\eqref{eq.vxo2yt0}. To prove~\eqref{eq.gluacxg}, set $m=1$, $r=4$ in~\eqref{eq.xk99wvg}.
\end{proof}

\hrule

\hrule

\hrule

\end{document}